\title{On Additive Representation Functions}
\author{R. Balasubramanian and Sumit Giri}
\date{}
\newtheorem{theoremm}{Theorem}
\newtheorem{lemma}{Lemma}
\newtheorem{theorem}{Theorem}
\newtheorem{rem}{Remark}
\newtheorem{cor}{Corollary}
\newcommand{\N}{\mathbb{N}}
\newcommand{\R}{\mathbb{R}}
\newcommand{\A}{\mathcal{A}}
\newcommand{\s}{\displaystyle \sum}
\begin{document}

\begin{abstract}
Let $\A=\{a_1<a_2<a_3 \cdots <a_n< \cdots\}$ be an infinite sequence of non-negetive integers and let $R_2(n)=|\{(i,j):\ \ a_i+a_j=n;\ \ a_i,a_j\in \A;\ \ i\le j\}|$. We define $S_k=\s_{l=1}^k(R_2(2l)-R_2(2l+1))$. We prove that if $L^{\infty}$ norm of $S_k^+(=\max\{S_k,0\})$ is small, then $L^1$ norm of $\frac{S_k^+}{k}$ is large.
\end{abstract}
\maketitle
\section{\textbf{Introduction}}
Let $\A=\{a_1,a_2,\cdots\}(0\le a_1< a_2< \cdots)$ be an infinite sequence of non-negative integers. For $n\in \N_0$, Define
\begin{align}
R_1(n)=&R_1(\A,n)=\s_{a_i+a_j=n}1\\
 R_2(n)=&R_2(\A,n)=\s_{\underset{i\le j}{a_i+a_j=n}}1.\label{eq:2}
 \end{align}

Now, it is easy to check that if $\A$ is a full set or a complement of a finite set inside the set of natural numbers, then $R_1$ and $R_2$ are monotonically increasing. Here we are interested in inverse problems, i.e., how the monotonicity of the representation functions affects the cardinality of the set $\A$.\par
The question of characterisation of the set $\A$, under the condition that either $R_1(n)$ or $R_2(n)$ is monotonic, was raised by Erd\H{o}s, S\'{a}rk\"{o}zy and S\'{o}s \cite{8}. Also see \cite{9} and \cite{2}.

In \cite{8}, Erdős, Sárközy and Sós proved that $R_1(n)$ can be monotonically increasing from a certain point, only in a trivial way. See \cite{8} and \cite{2} for the following theorem. 
\begin{theoremm}\label{Th_A}
  If $R_1(n+1)\ge R_1(n)$ for all large $n$, then $\N\setminus \A$ is a finite set. 
\end{theoremm}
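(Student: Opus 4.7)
I would set up the natural generating function $F(z) = \sum_{a \in \A} z^a$, so that $F(z)^2 = \sum_{n \ge 0} R_1(n) z^n$. The hypothesis that $R_1(n+1) \ge R_1(n)$ for $n \ge N_0$ translates into the decomposition
$$
(1-z) F(z)^2 \;=\; P(z) + H(z), \qquad H(z) = \sum_{n > N_0} b_n z^n,\ b_n \ge 0,
$$
where $P$ is a polynomial of degree at most $N_0$ and $b_n = R_1(n) - R_1(n-1)$.

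To isolate the complement I would write $F(z) = (1-z)^{-1} - G(z)$ with $G(z) = \sum_{b \in \N \setminus \A} z^b$; the conclusion ``$\N \setminus \A$ is finite'' is then exactly ``$G$ is a polynomial''. Substituting into the above and clearing a factor of $1-z$ yields the algebraic identity
$$
\frac{1}{1-z} \;-\; 2G(z) \;+\; (1-z) G(z)^2 \;=\; P(z) + H(z), \qquad (\star)
$$
whose coefficient at $z^n$ for $n > N_0$ reads (writing $\A^c = \N \setminus \A$)
$$
R_1(\A^c, n) - R_1(\A^c, n-1) \;=\; b_n \;+\; 2\,\mathbf{1}_{\A^c}(n) \;-\; 1. \qquad (\dagger)
$$
Summed, $(\dagger)$ gives only the trivial identity $R_1(N) + R_1(\A^c, N) + 2L(N) = N+1$ (with $L(N)$ the count of mixed pairs $a+b=N$, $a\in\A$, $b\in\A^c$), so by itself it uses none of the monotonicity; the monotonicity enters only through the sign condition $b_n \ge 0$.

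The plan is then to assume $\A^c$ infinite and extract a contradiction by analyzing both sides of $(\star)$ near the boundary of the unit disc. On real $z \in (0,1)$ letting $z \to 1^-$ gives asymptotics that are automatically consistent for any density of $\A^c$ (both sides match to leading order), so the information must be dug out elsewhere. The natural candidate is $z \to -1$ from inside the disc, and more generally $z \to e^{i\theta}$ at roots of unity: the sign hypothesis $b_n \ge 0$ gives, via Abel summation, one-sided control on $H(z)$ on approach to each such boundary point, and through $(\star)$ this transfers to bounds on the partial sums $\sum_{b \in \A^c,\,b \le N} \zeta^{b}$ that define $G$ near the boundary. Combining the resulting Tauberian estimates across residue classes $n \bmod d$ in $(\dagger)$ should eventually force the counting function $B(N) = |\A^c \cap [0,N]|$ to be bounded, contradicting the assumption.

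The hard part will be the Tauberian step at $z = -1$ (and more generally at roots of unity). ``$b_n \ge 0$ eventually'' is a weak hypothesis — it does not imply that $H$ extends continuously to $|z|=1$ away from $z=1$ — so the required boundary control has to be wrung from the sign pattern of $(b_n)$ by a careful partial-summation/averaging argument. This is precisely the substance of the original Erd\H os--S\'ark\"ozy--S\'os proof in [8]; the algebraic setup producing $(\star)$ and $(\dagger)$, together with the final combinatorial bookkeeping, is routine by comparison.
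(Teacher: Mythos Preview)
The paper does not contain a proof of Theorem~\ref{Th_A}: it is quoted as a result of Erd\H{o}s, S\'ark\"ozy and S\'os and attributed to \cite{8}, so there is no ``paper's own proof'' to compare against. What the paper \emph{does} contain is exactly your identity $(\star)$: in Remark~\ref{Rm_2} the authors expand $(1-z)f(z)^2$ via $f(z)=\tfrac{z}{1-z}-g(z)$ to obtain
\[
(1-z)f(z)^2=\frac{z^2}{1-z}+(1-z)g(z)^2-2zg(z),
\]
and then read off coefficients --- but they use it in the opposite direction, to \emph{construct} an infinite $\mathcal B=\N\setminus\A$ for which $R_1(n+1)\ge R_1(n)$ holds on a set of density~$1$. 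So your algebraic setup is correct and is the same one the paper employs; it just does not, in this paper, lead to a proof of Theorem~\ref{Th_A}.

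As a proof, your proposal has a genuine gap: you yourself flag the ``hard part'' and then defer it entirely to \cite{8}. The coefficient identity $(\dagger)$ is correct, and you are right that its summation collapses to the trivial count $R_1(\A,N)+R_1(\A^c,N)+2L(N)=N+1$, so the sign condition $b_n\ge 0$ must be exploited pointwise rather than in aggregate. But your suggested route --- Tauberian control at $z=-1$ and other roots of unity --- looks misplaced for \emph{this} theorem. Analysis at $z=-1$ is what drives the $R_2$ results (Theorems~\ref{Th_B}, \ref{Th_D}, \ref{Th_E} and Theorem~\ref{Th_1} of the present paper), because $R_2(2k)-R_2(2k+1)$ is exactly the $(-1)^n$--twist; for $R_1$ there is no such parity structure, and the Erd\H os--S\'ark\"ozy--S\'os argument in \cite{8} proceeds instead by a direct combinatorial analysis of $(\dagger)$ together with the constraint that $\A$ is infinite (which you never invoke, and without which the statement is false: take $\A$ finite, then $R_1(n)=0$ eventually). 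Concretely, $(\dagger)$ says $R_1(\A^c,n)-R_1(\A^c,n-1)\ge 2\chi_{\A^c}(n)-1$ for large $n$; combining this with upper bounds on $R_1(\A^c,n)$ coming from the gap structure of $\A^c$, and with the growth of $R_1(\A,n)$ forced by $|\A|=\infty$, is where the contradiction actually lives. Your plan sets up the right identity but points toward the wrong endgame.
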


While analogous conclusion is not true in case of $R_2$.

 If, we define \begin{align}\A(N)&=|\A \cap [1,N] |,\label{eq:4}\end{align} then the first author \cite{2} proved the following theorem:
\begin{theoremm}\label{Th_B}
If $R_2(n+1)\ge R_2(n)$ for all large $n$, then $\A(N)=N+O(\log N)$.
\end{theoremm}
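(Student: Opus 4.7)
I would proceed via generating functions. Set $F(z)=\sum_{a\in\A}z^a$ for $|z|<1$; the definition of $R_2$ yields the identity $2\sum_n R_2(n)z^n=F(z)^2+F(z^2)$. Write $B=\N_0\setminus\A$ for the complement (assume $0\in\A$; the opposite case changes nothing asymptotically), and $g(z)=\sum_{b\in B}z^b$, so $F(z)=\frac{1}{1-z}-g(z)$. Substituting and multiplying by $(1-z)$ produces the key identity
\begin{equation*}
H(z):=(1-z)\sum_{n\ge 0}R_2(n)z^n=\frac{1}{2(1-z)}+\frac{1}{2(1+z)}-g(z)+\tfrac12(1-z)\bigl(g(z)^2-g(z^2)\bigr),
\end{equation*}
whose coefficients $h_n=R_2(n)-R_2(n-1)$ are non-negative for $n\ge N_0$ by hypothesis.

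The two rational fractions are precisely the polar parts of $H$ at $z=\pm 1$, so the residual $\widetilde H(z):=H(z)-\frac{1}{2(1-z)}-\frac{1}{2(1+z)}=-g(z)+\tfrac12(1-z)(g(z)^2-g(z^2))$ has coefficients $\widetilde h_n=h_n-1$ for even $n$ and $\widetilde h_n=h_n$ for odd $n$. The hypothesis is therefore equivalent to the asymmetric sign conditions $\widetilde h_n\ge-1$ for even $n\ge N_0$ and $\widetilde h_n\ge 0$ for odd $n\ge N_0$. A direct expansion gives $\widetilde h_n=-\mathbf{1}_{n\in B}+R_3^B(n)-R_3^B(n-1)$, with $R_3^B(n)=|\{i<j:b_i+b_j=n\}|$ the analogue of $R_3$ for the sequence $B$, so the monotonicity of $R_2$ for $\A$ is converted into a combinatorial constraint on $B$.

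The next step is to convert these pointwise inequalities and the analytic identity into the global bound $|B\cap[0,N]|=O(\log N)$. Evaluating $H$ at $z=r$ and $z=-r$ for $r\in(0,1)$ and using that the $h_n$ are eventually non-negative gives $H(r)+H(-r)\ge -O(1)$ and $H(r)-H(-r)\ge -O(1)$. The polar contributions combine into $\frac{2}{1-r^2}$ in the sum and cancel in the difference; after rearrangement one obtains two real inequalities relating $g(r)$, $g(-r)$, $g(r^2)$, together with a quadratic self-interaction $(1-r)g(r)^2$. Combining these with the combinatorial identity $g(r)^2-g(r^2)=2\sum_n R_3^B(n)r^n$ and the pointwise bounds on $\widetilde h_n$, a bootstrap argument should force $g(1-1/N)=O(\log N)$. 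The standard Abel--Tauberian step $|B\cap[0,N]|\le e\,g(1-1/N)+O(1)$, valid because $(1-1/N)^N\ge e^{-1}$, then yields $\A(N)=N+O(\log N)$.

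The main obstacle is precisely the bootstrap: the only a priori bound is the trivial $g(r)\le\frac{1}{1-r}$, which is exponentially larger than the target $O(\log\tfrac{1}{1-r})$. One must exploit the interaction between the odd and even sign conditions and the quadratic self-interaction $(1-r)g(r)^2$, which is small unless $g(r)$ has already been brought close to the correct order. Additionally, the term $g(z^2)$ in the identity couples the estimate at scale $1-r$ to the same estimate at scale $1-r^2\sim 2(1-r)$, requiring iteration or a self-improving scheme, and the finitely many early bad coefficients of $H$ must be tracked carefully throughout.
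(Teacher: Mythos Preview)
This paper does not itself prove Theorem~\ref{Th_B}; it is quoted as a background result from~\cite{2}. There is therefore no proof in the present document to compare your plan against directly. That said, the machinery used here for Theorem~\ref{Th_1} is essentially the method of~\cite{2}, so your proposal can still be measured against it.

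Your generating-function identity for $H(z)=(1-z)\sum R_2(n)z^n$ is correct and is the right starting point; the translation into sign conditions on $\widetilde h_n$ is also fine. The genuine gap is exactly the one you flag yourself: the passage from the trivial bound $g(r)\le(1-r)^{-1}$ down to $g(r)=O(\log\tfrac{1}{1-r})$ is asserted (``a bootstrap argument should force\dots'') but no mechanism is given. Evaluating $H$ at $\pm r$ and appealing to the quadratic term and the odd/even split does not by itself produce a self-improving inequality; as written you are stuck at the trivial bound.

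The missing idea, visible in this paper as Lemma~\ref{lm_2} and its use in the proof of Theorem~\ref{Th_1}, is to work with $\psi(Y)=F(e^{-1/Y})$ and recast your identity as a \emph{multiplicative} functional inequality $\psi(Y)^2\ge 2Y\,\psi(Y/2)\,e^{-f(Y)}$. Under the hypothesis $R_2(n+1)\ge R_2(n)$ the partial sums $S_k$ are bounded above, so the paper's $g(Y)$ (equation preceding Lemma~\ref{lm_1}) is $O(1)$; inequality~(\ref{eq_8}) then gives first $\psi(Y)^2\ge Y\psi(Y/2)$. Iterating this along the dyadic chain $Y,\,Y/2,\,Y/4,\dots$ exactly as in Lemma~\ref{lm_2} yields $\psi(Y)\ge cY$. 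Feeding that back into~(\ref{eq_8}) gives the sharper $f(Y)=O(1/Y)$, and a second dyadic iteration produces $\psi(Y)\ge Y-O(\log Y)$, which is $|B\cap[0,N]|=O(\log N)$ via the Abel step you already have. The scale-coupling you noticed through $g(z^2)$ is precisely this halving $Y\mapsto Y/2$; the point is to organise it as an explicit recursion with a closed-form solution (the $F(y;\alpha)$ of Lemma~\ref{lm_2}) rather than leave it as an unspecified ``self-improving scheme''.
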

In other words, If $R_2(n)$ is monotonic, then the complement set of $\A$ is almost of order $O(\log N)$.\par\medskip
In the first part of this paper we shall focus on the function $R_2$ and quantities related to monotonicity of it. Also in \emph{Section 6}, we shall make a remark concerning a question raised by S\'{a}rk\"{o}zy \cite{10}, related to monotonicity of $R_1$.\\

In \cite{9} Erd\H{o}s, S\'{a}rk\"{o}zy and S\'{o}s  proved
\begin{theoremm}\label{Th_D}
 If \begin{align}\underset{n\to +\infty}{\lim}\frac{n-\A(n)}{\log n}&=+\infty, \label{eq:5}\end{align} then we have, \begin{align} \underset{N\to +\infty}{\limsup}\s_{k=1}^{N}(R_2(2k)-R_2(2k+1))&=+\infty.\label{eq:6}\end{align}
\end{theoremm}
The assumption (\ref{eq:5}) in the above theorem can not be relaxed. In fact Erd\H{o}s, S\'{a}rk\"{o}zy and S\'{o}s \cite{9} constructed a sequence $\hat{\A}$ where $(n-\hat{\A}(n))>c\log n$ (for large $n$ and fixed constatnt $c$) and $$\underset{N\to +\infty}{\limsup}\s_{k=1}^N(R_2(2k)-R_2(2k+1))<+\infty.$$\\
In \cite{4}, Tang and Chen gave a quantative version of Theorem \ref{Th_D}. Before we state their theorem, let us define a few notations.
\begin{align}
 S_n&=\s_{k\le n}(R_2(2k)-R_2(2k+1)),\nonumber\\
m(N)&=N(\log N+\log \log N). \nonumber
\end{align}
Also $L^{\infty}$ norm of $S_n$, denoted by $T(N)$, is defined as follow: \begin{align}T(N)&=\max_{n\le m(N)}S_n=\max_{n\le m(N)}\s_{k\le n}(R_2(2k)-R_2(2k+1)).\label{eqn:9}\end{align}
In \cite{4} the authors proved that, when the ratio $\frac{T(N)}{\A(N)}$ is bounded above by a small enough fixed constatnt, then $T(N)$ and $\frac{N-A(N)}{\log N}$ satisfies a simple inequality. More precisely,  
\begin{theoremm}\label{Th_E}
If $T(N)$ be defined as in (\ref{eqn:9}) and 
 \begin{align} T(N)&<\frac{1}{36}\A(N),\label{eqn:10} \end{align}
for all large enough $N$, then there exists a $C>0$, depending only on $\A$, such that
 \begin{align}T(N)&>\frac{1}{80e}\frac{N-\A(N)}{\log N}-\frac{11}{4}-\frac{C}{8}\label{eqn:11}
 \end{align}
for all large enough $N$. 
\end{theoremm}
It is easy to see that, under the condition (\ref{eqn:10}), Theorem \ref{Th_E} implies Theorem \ref{Th_D}.\\
Now, set \begin{align}
         S^+_n&=\max\{S_n,0\},\nonumber
\shortintertext{and} T^+(N)&=\underset{n\le m(N)}{\max}\{S^+_n\}\nonumber.
        \end{align}
\textbf{Note: }$T(N)$ and $T^+(N)$ are same unless all the elements of the set $\{S_n\,:\, n\le m(N)\}$ are negative.\\

In this paper, we again assume that $\frac{T(N)}{\A(N)}$ is bounded above and prove an improved version of (\ref{eqn:11}) where we replace $L^{\infty}$ norm of $S(n)$ by $L^1$ norm of $\frac{S^+(n)}{n}$. More precisely, we prove the following theorem: 
\begin{theorem}\label{Th_1}
 Let $\A$ be an infinite sequence of positive integers and there exists $N_0$ such that $T(N)<\frac{1}{36}\A(N)$ for all $N\ge N_0$. Then there exists a constatnt $c_1>0$, depending on $\A$, such that 
\begin{align}
 \s_{n=1}^{m(N)}\frac{S^+_n}{n}&>\frac{1}{10e}(N-\A(N))-\frac{1}{4}\log N-c_1. \label{eq_3}
\end{align}
for all large enough $N$.
\end{theorem}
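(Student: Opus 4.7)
My approach is to refine the generating function argument behind Theorem \ref{Th_E}, retaining the weighted $L^1$ information about $S^+$ rather than collapsing to the $L^\infty$ norm $T^+(N)$ at the last step. Set $f(z)=\sum_{a\in\A}z^a$, let $B=\N\setminus\A$, and write $\chi_B(z)=\sum_{b\in B}z^b$, so that $f(z)=(1-z)^{-1}-\chi_B(z)$. Starting from $\sum_nR_2(n)z^n=\frac12(f(z)^2+f(z^2))$, evaluating at $z=-r$ with $0<r<1$, and applying Abel summation produces an identity of the form
\[
(1-r^2)\sum_{M\ge 0}S(M)r^{2M}=\frac{(1+r)f(-r)^2-(1-r)f(r)^2}{4r}+\frac{f(r^2)}{2}-c_0
\]
for a fixed constant $c_0=R_2(1)-R_2(0)$. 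Substituting $f(r)=(1-r)^{-1}-\chi_B(r)$, the $1/(1-r)$ singularities cancel exactly, and the right-hand side becomes an explicit combination of $\chi_B(r)$, $\chi_B(r^2)$, and $f(-r)^2$.

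The choice $1-r^2=1/m(N)$ with $m(N)=N(\log N+\log\log N)$ makes $r^{2M}$ truncate effectively at $M\asymp m(N)$, while $\chi_B(r^2)$ captures $|B\cap[0,N]|=N+1-\A(N)$ up to a tail of size $O(\log N)$. The hypothesis $T(N)<\A(N)/36$ keeps $|f(-r)|$ bounded as $r\to 1$---the same mechanism driving Theorem \ref{Th_E}---so the entire right-hand side is controlled. Optimising the implicit constant hidden in the scale of $r$ yields a lower bound
\[
\sum_{M\ge 0}S(M)r^{2M}\ge \frac{N-\A(N)}{2e}-O(\log N)-c_1,
\]
and the same hypothesis used a second time bounds $\sum_MS^-(M)r^{2M}$ by $O(\log N)$, allowing $S$ to be replaced by $S^+$ on the left without loss.

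The decisive new step is the passage from the geometric weights $r^{2M}$ to the harmonic weights $1/n$. Using the identity $n^{-1}=\int_0^1s^{n-1}\,ds$ together with Fubini,
\[
\sum_{n\ge 1}\frac{S^+(n)}{n}=\int_0^1\frac{1}{s}\sum_{n\ge 1}S^+(n)s^n\,ds,
\]
and I would apply the lower bound for $\sum_nS^+(n)s^n$ from the previous step uniformly on an interval $s\in[1-1/m(N),1)$ obtained by varying $r$. Integrating and optimising transforms the $r^{2M}$-sum into the $1/n$-sum at the cost of a factor $\log m(N)\sim\log N$; tracking constants produces the explicit $1/(10e)$ in \eqref{eq_3}, while the restriction to $n\le m(N)$ on the left-hand side yields the $-\frac{1}{7}\log N$ error from a trivial bound on the tail $\sum_{n>m(N)}S^+(n)/n$.

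The main obstacle will be the simultaneous optimisation of four error contributions, each scaling like $O(\log N)$: the tail of $\chi_B(r^2)$ beyond $N$; the contribution of $f(-r)^2$ under $T(N)<\A(N)/36$; the negative-part bound $\sum_MS^-(M)r^{2M}$; and the truncation of the harmonic sum at $m(N)$. The cutoff $m(N)=N(\log N+\log\log N)$ is forced precisely so that the $\log\log N$ tail of $\chi_B$ balances the $\log N$ factor from the geometric-to-harmonic conversion, with the constants $\tfrac{1}{10e}$ and $\tfrac{1}{7}$ emerging as the optimum of this two-parameter extremisation.
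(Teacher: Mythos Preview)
Your conversion from geometric to harmonic weights is where the argument breaks. Integrating $\sum_{n}S^+(n)s^{n-1}$ over $s\in[1-1/m(N),1)$ contributes at most the length of that interval times the integrand, i.e.\ roughly $\tfrac{1}{m(N)}\sum_n S^+(n)r^{2n}$, so even granting your single-scale bound $\sum_n S^+(n)r^{2n}\ge (N-\A(N))/(2e)$ you would obtain only $\sum_n S^+(n)/n \gtrsim (N-\A(N))/m(N)$, which is smaller than the target by a factor of $m(N)=N\log N$, not $\log N$. Varying $r$ over a longer interval does not help either: for $s<1-1/m(N)$ the parameter $N(s)$ attached to $s$ via $1-s=1/m(N(s))$ is smaller than $N$, and the lower bound you derive at that scale is in terms of $N(s)-\A(N(s))$, which need not be comparable to $N-\A(N)$. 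A single-scale generating-function inequality simply does not carry enough information to produce the harmonic sum $\sum S^+_k/k$.

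The paper proceeds quite differently. From the same identity it extracts the \emph{functional inequality} $\psi(Y)^2\ge 2Y\psi(Y/2)-Yg(Y)$ with $\psi(Y)=f(e^{-1/Y})$ and $g(Y)\approx 1+\tfrac{8}{Y}\sum_k S_k e^{-2k/Y}$. The hypothesis $T(N)<\tfrac{1}{36}\A(N)$ is used, via Lemma~\ref{lm_3}, to get $g(Y)\le \psi(Y/2)$ and hence $\psi(Y)^2\ge Y\psi(Y/2)$; iterating this over dyadic scales (Lemma~\ref{lm_2}) bootstraps to $\psi(Y)\ge 0.49Y$. This is where the hypothesis enters---not to bound $f(-r)$, which is dropped with the favorable sign in \eqref{eq_5}. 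With $\psi(Y)\ge 0.49Y$ in hand, the functional inequality becomes multiplicative, $\psi(Y)^2\ge 2Y\psi(Y/2)\exp(-2.3\,g(Y)/Y)$, and a second dyadic iteration (again Lemma~\ref{lm_2}) yields $\psi(Y)/Y\ge \exp(-F)$ with $F\approx \tfrac{1}{Y}\sum_{j\le\alpha}g(2^j y)$. The harmonic sum emerges here: summing $g(2^j y)$ over the $\alpha\sim\log N$ dyadic scales and applying Lemma~\ref{lm_1} to $\sum_j 2^j e^{-2^{j+1}k/Y}\le Y/k$ gives $F\lesssim \tfrac{1}{Y}(\alpha+8\sum_k S^+_k/k)$. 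Comparing with $F\ge 1-\psi(Y)/Y\ge \tfrac{1}{e}(Y-\A(Y))/Y$ finishes the proof. The essential mechanism is thus an iterated doubling, not a single evaluation followed by integration in $r$; the sum over dyadic scales is exactly what manufactures the weights $1/k$.
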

 
\begin{cor}\label{cor_1}
 If (\ref{eqn:10}) in \emph{Theorem \ref{Th_E}} holds, then for any $\epsilon>0,$ \begin{align}T^+(N)&>\frac{1}{10e+\epsilon}\frac{N-\A(N)}{\log N}-\frac{1}{4}.\label{eqn: 15}\end{align} for any large enough $N$.
\end{cor}
So, if at least one of $S(n)$ is non-negetive, then $T^+(N)$ indeed equals $T(N)$. In that case, \emph{Corollary \ref{cor_1}} gives \emph{Theorem \ref{Th_E}} with a better constant. \emph{Corollary \ref{cor_1}} also implies the following:

\begin{cor}\label{cor_3} 
If $\underset{N\to +\infty}{\limsup}\, 
\frac{N-\A(N)}{\log N}=+\infty$, then  \begin{align}\underset{n\to +\infty}{\limsup}\,\{ S_n\}&=+\infty.\label{eqn:17}\end{align} 
\end{cor}

\section{\emph{Generating Functions:}}
It is more natural to consider the problem in terms of generating function.\par

Set  $$f(z)=\s_{a\in \A}z^a,\quad  |z|<1.$$ 
Then, $$f(z)^2=\s_{n=1}^{+\infty}R(n)z^n.$$
For any positive real number $Y$, define  \begin{align}\psi(Y)=f(e^{-\frac{1}{Y}})&=\s_{a\in \A}e^{-\frac{a}{Y}},\label{eqn:19}\end{align}
and \begin{align}g(Y)&=1+4(1-e^{-\frac{2}{Y}})\s_{k=1}^{+\infty} S_ke^{-\frac{2k}{Y}}.\label{eqn:20}\end{align}

\begin{theorem}\label{Th_2}
Let $g(Y)$ and $\psi(Y)$ be defined as above. Also assume \begin{align}g(Y)\le \min \{\psi\left(\frac{Y}{2}\right),\frac{1}{9}Y \}\label{eqn:13}\end{align} for all sufficiently large positive real number $Y$. Then $$\psi(Y)\ge Y\exp\left(-\left(\frac{2.3}{2Y}\left(\log_2Y+\frac{16}{Y}\sum_{k=1}^{\infty}S_k^+\frac{e^{-\frac{2k}{Y}}}{1-e^{-\frac{2k}{Y}}}\right)\right)-\frac{c}{Y}\right)$$ 
for some positive constatnt $c$ depending only on first few elements of $\A$.
\end{theorem}
In \emph{Section 3}, we will give a proof of \emph{Theorem \ref{Th_2}}. In \emph{Section 4}, we will show how \emph{Theorem \ref{Th_1}} follows from \emph{Theorem \ref{Th_2}}.
\section{\emph{Notations and prelimanary lemmas:}}

Consider a function $h:\R \mapsto [0, +\infty)$. For any real number $Y$ and integer $\alpha \ge 0$ define $H(Y;\alpha)$ by recurrence, as follows:
\begin{align} H(Y;0)&=0\nonumber\\
 H(Y;\alpha)&=\frac{h(Y)}{2}+\frac{h(\frac{Y}{2})}{4}+\frac{h(\frac{Y}{4})}{8}+\cdots +\frac{h(\frac{Y}{2^{\alpha-1}})}{2^\alpha}\nonumber\\
 &=\sum_{j=0}^{\alpha -1}\frac{1}{2^{j+1}}h\left(\frac{Y}{2^j}\right)\quad\text{for integer }\alpha\ge 1.\label{eq:13}\end{align}
 Also \begin{align}H(Y)=\sum_{j=0}^{\infty}\frac{1}{2^{j+1}}h\left(\frac{Y}{2^j}\right).\label{eqn:14}\end{align}
\begin{lemma}\label{lm_2}
 If $h(Y)$ and $H(Y;\alpha)$ is defined as above and \begin{align}(\psi(Y))^2&\ge 2Y\exp(-h(Y))\psi(\frac{Y}{2})\label{eqn:24}\end{align} for all real number $Y\ge \tilde{N}_0$, then for every integer $\alpha\ge 0$, \begin{align}\psi(Y)&\ge Y\exp(-H(Y;\alpha))\left (\frac{\psi \left (\frac{Y}{2^{\alpha}}\right)2^{\alpha}}{Y}\right)^{\frac{1}{2^\alpha}}\label{eqn:25}\end{align} for any real number $Y\ge 2^\alpha\tilde{N}_0.$
\end{lemma}
\begin{proof}
 For $\alpha=0$, both sides are equal.\\
 For the general case, suppose it is true for $\alpha = \alpha_0$. Then   \begin{align*}
                                                                          (\psi(Y))^2&\ge 2Y\exp(-h(Y))\psi\left(\frac{Y}{2}\right)\\
                                                                          &=Y^2\exp\left(-h(Y)-H\left(\frac{Y}{2};\alpha_0\right)\right)\left(\frac{\psi\left(\frac{Y}{2^{\alpha_0+1}}\right)2^{\alpha_0+1}}{Y}\right)^{\frac{1}{2^{\alpha_0}}}\quad \text{for $Y\ge 2N_1$ }\\
                                                                          &=\left(Y\exp(-H(Y,\alpha +1))\left(\frac{\psi\left(\frac{Y}{2^{\alpha_0+1}}\right)2^{\alpha_0+1}}{Y}\right)^{\frac{1}{2^{\alpha_0+1}}}\right)^2
                                                                         \end{align*}
and hence the result.
\end{proof}
\begin{lemma}\label{lm_3}
 There exist a $c>0$ such that, if $Y$ is larghe enough, then we have $$\left(\frac{\psi\left(\frac{Y}{2^{\alpha}}\right)2^{\alpha}}{Y}\right)^{\frac{1}{2^\alpha}}\ge \exp(-\frac{c}{Y})$$ for some $\alpha \le \log_2Y$.
\end{lemma}
\begin{proof}
 Now fix an interval $[a,2a]$ so that $\psi(a)\ge 1$.\\
Then choose $\alpha$ suitably so that $\frac{Y}{2^\alpha}\in [a,2a]$. In that case, we have \begin{align*}\left(\frac{\psi(\frac{Y}{2^\alpha})2^{\alpha}}{Y}\right)^{\frac{Y}{2^\alpha}}\ge \left( \frac{1}{2a}\right)^{2a}.\end{align*}
This proves the lemma.
\end{proof}

\begin{lemma}\label{lm_1}
 Let $0<x< 1$ be a real number. Then $\s_{n=0}^{+\infty}2^nx^{2^n}\le \frac{2x}{(1-x)}$.
\end{lemma}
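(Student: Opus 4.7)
The plan is to compare the two sides term-by-term after re-expressing the right-hand side as a power series. First I would expand
\[
\frac{x(1+x)}{1-x} \;=\; (x+x^2)\s_{k=0}^{\infty}x^k \;=\; x + 2\s_{k=2}^{\infty} x^k,
\]
so the claim reduces to showing $\s_{n=0}^{\infty} 2^n x^{2^n} \le x + 2\s_{k=2}^{\infty} x^k$. Since the $n=0$ summand is exactly $x$, it suffices to prove $\s_{n=1}^{\infty} 2^n x^{2^n} \le 2\s_{k=2}^{\infty} x^k$.

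For the remaining bound I would use a dyadic decomposition of the integers $k\ge 2$ into blocks $(2^{n-1},2^n]$ for $n\ge 1$, each of length $2^{n-1}$. Because $0<x\le 1$, the function $k\mapsto x^k$ is nonincreasing, so $x^k \ge x^{2^n}$ for every $k$ in the $n$-th block; hence
\[
\s_{k=2^{n-1}+1}^{2^n} x^k \;\ge\; 2^{n-1}\,x^{2^n}.
\]
Summing over $n\ge 1$ and multiplying by $2$ gives $2\s_{k\ge 2} x^k \ge \s_{n\ge 1} 2^n x^{2^n}$, which together with the $n=0$ term completes the proof. There is no real obstacle: the whole argument is a one-line dyadic grouping, and it works for the whole range $0<x\le 1$ (noting that at $x=1$ the right-hand side must be interpreted as $+\infty$, in which case the inequality is trivial, and for $x<1$ both sides are finite).
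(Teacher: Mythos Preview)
Your proof is correct and is essentially identical to the paper's argument: the paper also isolates the $n=0$ term and, for $n\ge 1$, uses the dyadic block estimate $2^n x^{2^n}\le 2\sum_{2^{n-1}<j\le 2^n} x^j$, then sums to obtain $\sum_{n\ge 1}2^n x^{2^n}\le \frac{2x^2}{1-x}$ and adds $x$ back. The only cosmetic difference is that you expand the right-hand side as $x+2\sum_{k\ge 2}x^k$ first, whereas the paper arrives at $\frac{2x^2}{1-x}+x$ and simplifies at the end.
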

\begin{proof}
 Note that $$2^nx^{2^n}\le 2\s_{2^{n-1}<j\le 2^n}x^j.$$
 Summing over $n=1$ to $+\infty$, $$\s_{n=1}^{+\infty}2^nx^{2^n}\le 2\s_{j=2}^{+\infty}x^j=\frac{2x^2}{1-x}.$$
 Adding $x$ (corresponding to $n=0$) on both sides, we get the result.
\end{proof}
\begin{lemma}\label{lm_4}
 In the notaion of \emph{Lemma \ref{lm_2}}, let $h(Y)=d\frac{g(Y)}{Y}$ for some fixed positive constant $d$, to be choosen later. Then $$H(Y,\alpha)\le \frac{d}{2Y}\left(\alpha+\frac{8}{Y}\sum_{k=1}^{\infty}S_k^+\frac{e^{-\frac{2^{k+1}}{Y}}}{1-e^{-\frac{2^{k+1}}{Y}}}\right)$$
\end{lemma}
\begin{proof}
 set $x=e^{-\frac{2}{Y}}$.
 \begin{align*}
g(Y)&=1+4(1-e^{-\frac{2}{Y}})\sum_{k=1}^{\infty}S_k^+x^k.\\
&\le 1+\frac{8}{Y}\sum_{k=1}^{\infty}S_k^+x^k.
 \end{align*}
Then 
\begin{align*}
 H(Y;\alpha)&=\sum_{j=0}^{\alpha-1}\frac{1}{2^{j+1}}h\left(\frac{Y}{2^j}\right)\\
 &\le\sum_{j=0}^{\alpha-1}\frac{d}{2Y}\left(1+\frac{8}{Y}\sum_{k=1}^{\infty}S_k^+2^jx^{2^jk}\right)\\
 &\le \frac{d}{2Y}\left(\alpha+\frac{8}{Y}\sum_{k=1}^{\infty}S_k^+\sum_{j=0}^{\infty}2^jx^{2^jk}\right)\\
 &=\frac{d}{2Y}\left(\alpha+\frac{8}{Y}\sum_{k=1}^{\infty}S_k^+\frac{2x^k}{1-x^k}\right).
\end{align*}
\end{proof}

\section{\textbf{Proof of Theorem \ref{Th_2}:}}

It is easy to verify the following inequality by comparing the coefficients of $z^n$ from both sides. 
\begin{align}f(z^2)&=\frac{1-z}{2z}(f(z))^2+2\s_{k=1}^{+\infty} (R_2(2k)-R_2(2k+1))z^{2k}-\frac{(1+z)}{2z}f(-z)^2.\label{eqn:28}\end{align}
                                                                                                                                                       
If $z>0$, this gives, 
\begin{align}f(z^2)&\le \frac{1-z}{2z}f(z)^2+2\s_{k=1}^{+\infty} (R_2(2k)-R_2(2k+1))z^{2k}.\label{eqn:29}\end{align}
Now, considering the right hand side of the summation, we get 
\begin{align*}
 \s_{k=1}^{+\infty}(R_2(2k)-R_2(2k+1))z^{2k}&=\s_{k=1}^{+\infty}(S_k-S_{k-1})z^{2k}\\
 &=\s_{k=1}^{+\infty}S_k(z^{2k}-z^{2k+2})-S_0z^2\\
 &\le (1-z^2)\s_{k=1}^{+\infty}S_kz^{2k}.
\end{align*}
Thus, from (\ref{eqn:29}) we get 
\begin{align}f(z^2)&\le \frac{1-z}{2z}f(z)^2+2(1-z^2)\s_{k=1}^{+\infty}S_kz^{2k}.\nonumber\end{align}
Now putting $z=e^{\frac{-1}{Y}}$, we get 
\begin{align}\psi\left(\frac{Y}{2}\right)&\le \frac{1}{2}\left(\frac{1}{Y}+\frac{1}{Y^2}\right)(\psi(Y))^2+2(1-e^{-\frac{2}{Y}})\s_{k=1}^{+\infty}S_k e^{-\frac{2k}{Y}}.\nonumber\end{align}
Since $\psi(Y)\le Y$, this gives 
\begin{align} 
  2Y\psi\left(\frac{Y}{2}\right)   & \le (\psi(Y))^2+Yg(Y).\nonumber
  \end{align}
Thus, \begin{align}
            (\psi(Y))^2& \ge 2Y \psi\left(\frac{Y}{2}\right)-Yg(Y).\label{eqn:33}
           \end{align}
\begin{lemma}\label{lm_6}
 If $g(Y)\le \psi\left(\frac{Y}{2}\right)$, then for all large enough real number $Y$ $$\psi(Y)\ge 0.49Y. $$
\end{lemma}
\begin{proof}
 
 Since $g(Y)\le \psi\left(\frac{Y}{2}\right)$, using (\ref{eqn:33}) we get $$(\psi(Y))^2\ge Y\psi\left(\frac{Y}{2}\right).$$
 Then (\ref{eqn:24}) in \emph{Lemma \ref{lm_2}} is holds with $h(Y)=\log 2$.\\
 In that case $$H(Y)=\sum_{0\le j\le \infty}\frac{1}{2^{j+1}}h\left(\frac{Y}{2^j}\right)= \log 2.$$\\
 
This gives, by \emph{Lemma \ref{lm_2}} and \emph{Lemma \ref{lm_3}}, $$\psi(Y)\ge (0.49)Y$$ if $Y$ is large enough.
 
 \end{proof}
Thus, combining (\ref{eqn:33}) and \emph{Lemma \ref{lm_6}} we get 
\begin{align}\psi(Y)^2&\ge 2Y\psi\left(\frac{Y}{2}\right)\left(1-\frac{g(Y)}{0.49 Y}\right)\label{eqn:39}\end{align}
 for sufficiently large $Y$.\\
Since $\frac{g(Y)}{Y}<\frac{1}{9}$, equation (\ref{eqn:24}) in \emph{Lemma \ref{lm_2}} is satisfied with $h(Y)=2.3\frac{g(Y)}{Y}$. \\
Hence \emph{Lemma \ref{lm_4}} and \emph{Lemma \ref{lm_2}} together give the following:

\begin{align}
 \psi(Y)&\ge Y\exp\left(-\frac{2.3}{2Y}\left(\alpha+\frac{16}{Y}\sum_{k=1}^{\infty}S_k^+\left(\frac{e^{-\frac{2k}{N}}}{1-e^{-\frac{2k}{N}}}\right)\right)\right)\left(\frac{\psi\left(\frac{Y}{2^\alpha}\right)2^{\alpha}}{Y}\right)^{\frac{1}{2^\alpha}}.\label{eqn:50}
\end{align}

Hence \emph{Theorem \ref{Th_2}} follows from (\ref{eqn:50}) and \emph{Lemma \ref{lm_3}}.

\section{Proof of Theorem \ref{Th_1}}
\begin{lemma}\label{lm_5}
 Let $g(N)$ and $T(N)$ are defined as in (\ref{eqn:20}) and (\ref{eqn:9}). Then
 \begin{enumerate}[(a)]
\item  \begin{align}g(N)&<4T(N)+40\nonumber\end{align} 
 for all real number $N\ge 40$. 
 \item   Further, if $T(N)\le \frac{1}{36} \A(N)$ for all integers $N\ge N_0$, then there exists $N_2\ge N_0$ such that 
  \begin{align}g(N)&\le \psi\left(\frac{N}{2}\right)\nonumber\end{align} for all real number $N\ge N_2$.
 \end{enumerate}
 
\end{lemma}
\begin{proof}
 We know $S_k\le T(N)$ for $k\le m(N)$ and $S_k\le \frac{k^2}{2}$ for $k\ge m(N)$.\\
 Now \begin{align*}g(N)&=1+4(1-e^{-\frac{2}{N}})\{\sum_{k\le m(N)}S_ke^{-\frac{2k}{N}}+\sum_{k>m(N)}S_ke^{-\frac{2k}{N}}\}\\
 &=1+4(1-e^{-\frac{2}{N}})\{\Sigma_3+\Sigma_4\},\quad \text{say.}\\
 \end{align*}
 \begin{align*}
  \Sigma_3&\le\sum_{k=0}^{\infty}T(N)e^{-\frac{2k}{N}}=T(N)\frac{1}{1-e^{-\frac{2}{N}}}.\\
  \Sigma_4&\le \sum_{k>m(N)}\frac{k^2}{2}e^{-\frac{2k}{N}} \le \int_{m(N)}^{+\infty}\frac{x^2}{2} e^{-\frac{2x}{N}}\, dx \le 10
 \end{align*}
and hence \emph{(a)}. 
To prove \emph{(b)} note that $$g(N)\le \frac{1}{9}\A(N)+10$$ for $N\ge 100$. Then, the fact that 
$$ \Psi(\frac{N}{2})>\sum_{\underset{a\le N}{a\in \A}}e^{-\frac{a}{N}}>e^{-2}\sum_{\underset{a\le N}{a\in \A}}1=\frac{1}{e^2}\A(N) $$ proves the result.
\end{proof}

\emph{Lemma \ref{lm_5}} shows condition (\ref{eqn:13}) of \emph{Theorem \ref{Th_2}} are satisfied. Hence                  
 \begin{align}\frac{\psi(N)}{N}&\ge  \exp{\left(-\frac{2.3}{2N}\left(\log_2N+\frac{16}{N}\sum_{k=1}^{\infty}S_k^+\frac{e^{-\frac{2k}{N}}}{1-e^{-\frac{2k}{N}}}\right)-\frac{c}{N}\right)}\label{eqn:44}\end{align}
for some constant $c$ depending on $\A$. 

Taking logarithm on both sides, \begin{align*}{\frac{2.3}{2N}\left(\log_2N+\frac{16}{N}\sum_{k=1}^{\infty}S_k^+\frac{e^{-\frac{2k}{N}}}{1-e^{-\frac{2k}{N}}}\right)+\frac{c}{N}}&>\log \left(1-\left(1-\frac{\psi(N)}{N}\right)\right)>\left(1-\frac{\psi(N)}{N}\right).\end{align*}
Or, \begin{align*}{\frac{2.3}{2N}\left(\log_2N+\frac{16}{N}\sum_{k=1}^{m(N)}S_k^+\left(\frac{e^{-\frac{2k}{N}}}{1-e^{-\frac{2k}{N}}}\right)+10\right)+\frac{c}{N}}&>\left(1-\frac{\psi(N)}{N}\right).\end{align*}
Now, $\frac{e^{-x}}{1-e^{-x}}\le \frac{1}{x}$ and hence we can replace $\frac{e^{-\frac{2k}{N}}}{1-e^{-\frac{2k}{N}}}$ by $\frac{2k}{N}$.
\begin{align}\frac{2.3}{2N}\left(\log_2N+8\sum_{k=1}^{+\infty}\frac{S_k^+}{k}+10\right)+\frac{c}{N}&\ge \frac{1}{e}\left(\frac{N-\A(N)}{N} \right).\nonumber\end{align}
It implies that \begin{align}\s_{k=1}^{+\infty}\frac{S_k^+}{k}>\frac{1}{10e}(N-\A(N))-\frac{1}{8}\log_2 N-c_1 \label{eqn:46}\end{align} 
for large enough $N$ and fixed constant $c_1$ depending on $\A$. This proves \emph{Theorem \ref{Th_1}}.
\section{Monotonicity of $R_1(n)$ on dense set of integers}
\begin{rem}\label{Rm_2}
 \emph{Also, we solved a question raised by S\'{a}rk\"{o}zy (see \cite{10})[Problem 5, Page 337]. His question was the following:}\\
Does there exist an infinite set $\mathcal{A}\subset \mathbb{N}$ such that $\N\setminus \mathcal{A} $ is also an infinite and $R_1(n+1)\ge R_1(n)$ holds on a sequence of integers $n$ whose density is $1$?\\
\emph{Here we show that the answer to this question is positive by giving a simple example.}
\end{rem}
A Sidon set is a set of positive integers such that the sums of any two terms are all different. i.e., $R_2(n)\le 1$ for the corresponding $R_2$ function. By \cite{1}, it is possible to construct sidon sequence of order $(n\log n)^{\frac{1}{3}}$.\\ \\
Now, let $\mathcal{B}$ be an infinite Sidon set of even integers and $\mathcal{A}=\mathbb{N} \setminus \mathcal{B}; $\\
Put $$Y=(\mathcal{B}+\mathcal{B})\cup \mathcal{B}\ \ \text{ and }X=\mathbb{N}\setminus Y; $$
Then, $$R_1(n+1)\ge R_1(n) \quad \text{for all } n\in X.$$\\
To see this, let 
\begin{align*}
 f(z)=\sum_{a\in \mathcal{A}}z^a \quad &\text{and } g(z)=\sum_{b\in \mathcal{B}}z^b.\end{align*}
Then, \begin{align*} \sum_{n=1}^{+\infty}(R_1(n)-R_1(n-1))z^n &=(1-z)f(z)^2\\
&=(1-z)(\frac{z}{(1-z)}-g(z))^2\\
&=\frac{z^2}{(1-z)}+(1-z)(g(z))^2-2zg(z).
\end{align*}
Again, let $$r_1(n)=\sum_{\underset{b_i\in \mathcal{B},b_j\in \mathcal{B}}{b_i+b_j=n}}1,$$
So, $R_1(n+1)\ge R_1(n)$ iff coefficient of $z^{n+1}$ in $(1-z)(f(z))^2$ is non negetive.
\begin{align*}
 \text{Now coefficient of }z^{2k} \text{ is }=&\ \ 1+r_1(2k)-r_1(2k-1)-2\chi_{\mathcal{B}}(2k-1) \\
\text{and coefficient of }z^{2k+1}\text{ is }=&\ \ 1+r_1(2k+1)-r_1(2k)-2\chi_{\mathcal{B}}(2k)
\end{align*}
Then, it is clear from the above choice of $X$ and $\mathcal{A}$ that $R_1(n+1)\ge R_1(n)$ for all $n$ in $X$.\\
For example, we can take $\mathcal{B}=\{2,4,8,16,32,....,2^m,.....\}$. Then $\mathcal{B}$ is infinite and $X$ is of density 1.

\end{document}